\theoremstyle{plain}
\newtheorem{thm}{Theorem}[section]
\newtheorem*{thm*}{Theorem}
\newtheorem{prop}[thm]{Proposition}
\newtheorem{remark}[thm]{Remark}
\def\R{{\mathbb{R}}}
\def\SL{{\text{\rm{SL}}}}
\def\Sp{{\text{\rm{Sp}}}}
\def\Ho{H_1^{(0)}}
\definecolor{purple}{rgb}{0.5,0,1}
\definecolor{darkgreen}{rgb}{0.1,0.4,0.2}
\definecolor{darkyellow}{rgb}{0.6,0.6,0.2}
\begin{document}
\title[Arithmetic Monodromy group]{An arithmetic Kontsevich--Zorich monodromy of a symmetric origami in genus 4}

\author{Xun Gong}
\address{Department of Mathematics, University of California San Diego\\ 9500 Gilman Dr, La Jolla,
CA 92093, USA}
\email{x1gong@ucsd.edu}

\author{Anthony Sanchez}
\address{Department of Mathematics, University of California San Diego\\ 9500 Gilman Dr, La Jolla,
CA 92093, USA}
\email{ans032@ucsd.edu}

\subjclass[2020]{Primary 37D40; Secondary 32G15\\
\emph{Key words and phrases: Translation surfaces,  Kontsevich–Zorich monodromy group, origami, arithmetic monodromy group.}}

\maketitle
\begin{abstract}
    We demonstrate the existence of a certain genus four origami whose Kontsevich--Zorich monodromy is arithmetic in the sense of Sarnak.  
    The surface is interesting because its Veech group is as large as possible and given by $\mathrm{SL}(2,\mathbb Z)$. When compared to other surfaces with Veech group $\mathrm{SL}(2,\mathbb Z)$ such as the Eierlegendre Wollmichsau and the Ornithorynque, an arithmetic Kontsevich--Zorich monodromy is surprising and indicates that there is little relationship between the Veech group and monodromy group of origamis. Additionally, we record the index and congruence level in the ambient symplectic group which gives data on what can appear in genus 4.
\end{abstract}

\section{Introduction}
The Kontsevich–Zorich monodromy group encodes the homological data of translation surfaces along $\textrm{SL}(2, \mathbb R)$-orbits. Due to the intersection form on the underlying surface, the Kontsevich–Zorich monodromy group lies as a subgroup of a symplectic group and creates a natural bridge between the geometric objects of translation surfaces and the Kontsevich–Zorich monodromy group and its algebraic nature.
One such question that has received much investigation is whether the Kontsevich–Zorich monodromy group is arithmetic or is thin in the sense of Sarnak \cite{PS}. An origami has \emph{arithmetic Kontsevich–Zorich monodromy} if the Kontsevich–Zorich monodromy group is Zariski dense and finite index in the ambient symplectic group and \emph{thin} if the Kontsevich–Zorich monodromy group is Zariski dense and infinite index.

In genus 2, the arithmeticity of all origami was established by  M\"{o}ller, see also Appendix B of \cite{long} for a proof.  In genus 3, an origami with arithmetic monodromy in the minimal stratum was established in Hubert--Matheus \cite{MR4120783}. Shortly thereafter, an infinite family of origami in genera 3, 4, 5, and 6 with arithmetic monodromy were exhibited in \cite{long,2301.06894}. 

Assuming arithmeticity has been established, finer questions regarding the algebraic nature of the monodromy group, such as the index and what type of group is obtained, have been considered. Experimental data \cite{long} shows that for genus 2 origamis with two conical singularities and less than 22 squares, the index is 1, 3, 4, 6, 12, or 24. For genus 2 origami with a single conical singularity, Kattler \cite{Kattler} has shown the index is always at most 3. Concerning what type of groups are obtained, in the context of symplectic groups, there is a positive solution to the congruence subgroup property. That is, all finite index subgroups of symplectic groups, such as the Kontsevich–Zorich monodromy groups in the works mentioned above, are a congruence group of some level $l$. In \cite{long}, they show a specific origami in genus 3 is conjugate, up to finite index, to a congruence subgroup of level 16 and of index 46080 in $\textrm{Sp}(4,\mathbb Z)$. 

Our main result is a new addition to the list of origami with arithmetic monodromy. Later in the introduction, we will see that this surface being arithmetic is surprising and relate this to the monodromy of other surfaces. Additionally, we record the first index and congruence level of a surface in genus 4 which yields experimental data for what can occur in the context of genus 4 origami. See Figure \ref{fig:surface} for an image of the surface.

\begin{figure}
    \centering
    \includegraphics[scale=0.58]{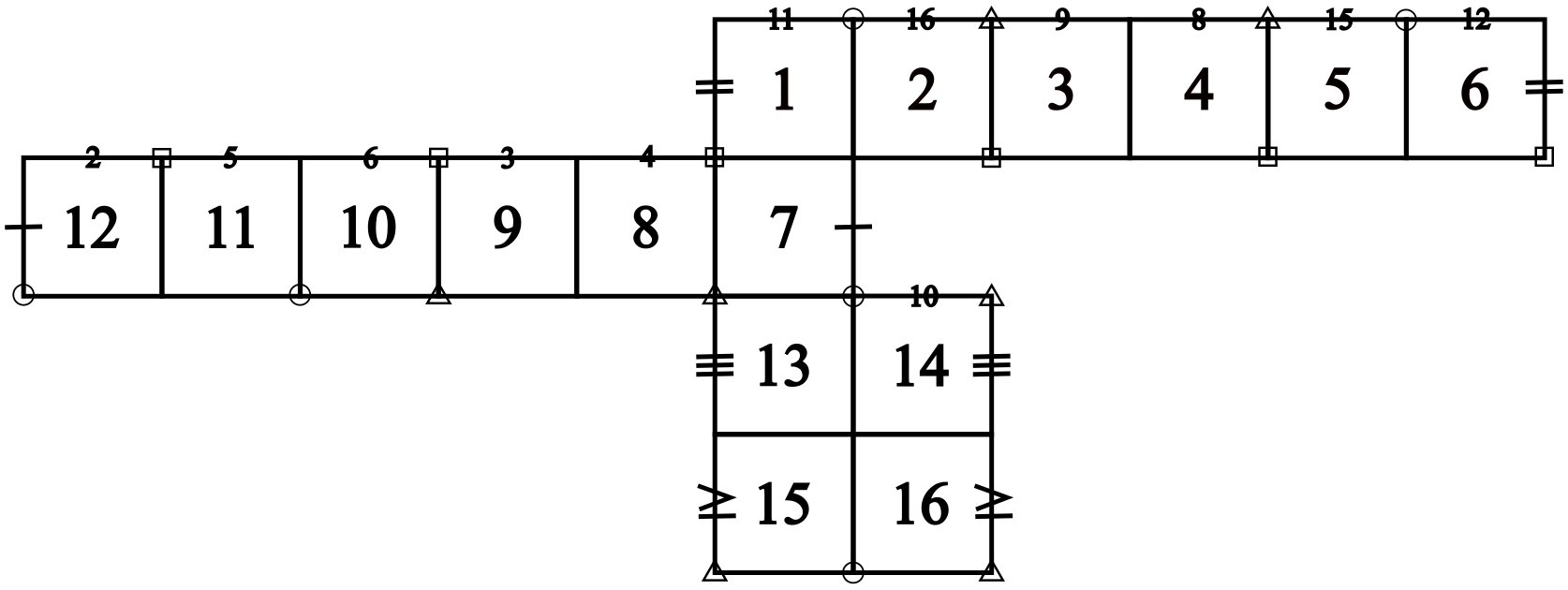}
    \caption{The origami $\mathcal O$ with identification labeled by a permutation.}
    \label{fig:surface}
\end{figure}

\begin{thm}\label{main}
The non-tautological part of the Kontsevich–Zorich monodromy group $\Gamma$ associated to a certain origami $\mathcal O$ of genus 4 is arithmetic. More precisely, up to a finite index subgroup that conjugates $\Gamma$ to a subgroup of the standard symplectic group $\Sp(6,\mathbb Z)$, the Kontsevich–Zorich monodromy group is a congruence subgroup of level 64 and of index $4156153952993280 = 2^{42}\cdot3^3\cdot5\cdot7$.
\end{thm}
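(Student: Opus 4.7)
The plan is to reduce Theorem \ref{main} to an explicit computation in $\Sp(6,\Z)$. I would begin by extracting from Figure \ref{fig:surface} the two permutations $h,v$ on the squares of $\O$ that encode the horizontal and vertical gluings. From these permutations one reads off an integral basis of the absolute homology $H_1(\O,\Z)$ and, by the standard description of the Kontsevich--Zorich cocycle on a square-tiled surface, writes down explicit matrices for the action of the parabolics $T_h=\begin{pmatrix}1&1\\0&1\end{pmatrix}$ and $T_v=\begin{pmatrix}1&0\\1&1\end{pmatrix}$ on $H_1(\O,\Z)$. Since the Veech group of $\O$ is claimed to equal $\SL(2,\Z)$ and is generated by these parabolics, this is enough to produce generators for the whole monodromy.

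Next I would identify the tautological subspace $\Hst\subset H_1(\O,\Q)$ coming from the torus cover $\O\to\T^2$ and pass to a symplectic complement, which is $6$-dimensional since $\O$ has genus $4$. Choosing a symplectic $\Z$-basis of an integral lattice in this complement identifies the restricted monodromy with a subgroup $\Gamma\subset\Sp(6,\Z)$ after conjugation by a finite-index element, which accounts for the caveat in the theorem statement. At this point I have two concrete matrices $A,B\in\Sp(6,\Z)$ that generate a finite-index subgroup of $\Gamma$, and everything that follows is a question about $\langle A,B\rangle$.

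The central step is to show $\Gamma$ is arithmetic. Zariski density in $\Sp(6)$ can be established by exhibiting unipotent elements whose logarithms, together with their commutators, span $\mathfrak{sp}(6)$ as a Lie algebra; this is a concrete linear-algebra check once $A$ and $B$ are in hand. Granted Zariski density, finite index can be obtained by combining strong approximation with a direct check that the reductions $\Gamma\to\Sp(6,\F_p)$ are surjective for every prime $p$ outside a controlled finite set, supplemented by a finite computation modulo a sufficient power of the remaining primes; this is the same route used for the arithmetic examples in \cite{long,2301.06894}. Each surjectivity check is a finite orbit enumeration in the relevant finite symplectic group and is carried out with a computer algebra system.

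To refine arithmeticity to level exactly $64$ and index $2^{42}\cdot 3^3\cdot 5\cdot 7$, I would compute the image $\overline{\Gamma}$ of $\Gamma$ in $\Sp(6,\Z/64\Z)$ by orbit--stabilizer enumeration and read off the index as $|\Sp(6,\Z/64\Z)|/|\overline{\Gamma}|$. To certify that the level is not a proper divisor of $64$, I would verify that the reduction map $\Sp(6,\Z/64\Z)\to\Sp(6,\Z/32\Z)$ is not injective on $\overline{\Gamma}$, equivalently that $\Gamma$ fails to contain the principal congruence kernel at level $32$. The main obstacle is the computational scale: since $|\Sp(6,\Z/64\Z)|$ has order roughly $2^{114}\cdot 3^4\cdot 5\cdot 7$, a naive enumeration is hopeless, and one must exploit the tower of principal congruence subgroups, short generating words obtained from the Veech-group action, and efficient linear algebra over $\Z/64\Z$ to make the verification tractable.
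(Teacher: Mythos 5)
Your setup through the production of explicit generators follows the same path as the paper: read off $h,v$, compute the action of $T$ and $S$ on $H_1(\O,\Z)$ via the intersection form, split off the tautological plane, and restrict to the six-dimensional zero-holonomy part. (One small correction: the restricted intersection form is not unimodular here --- the intersection matrix has determinant $16$ --- so you cannot simply choose a symplectic $\Z$-basis of the complement; you must pass to a genuine finite-index subgroup of $\Gamma$ before conjugating into $\Sp(6,\Z)$, which is what the theorem's caveat refers to.) Your Zariski-density check via logarithms of unipotents generating $\mathfrak{sp}(6)$ is a workable alternative to the paper's criterion (a Galois-pinching element plus a unipotent whose image of $B-\mathrm{Id}$ is non-Lagrangian), and the mod-$64$ orbit computation for the level and index is essentially what the cited software of Detinko--Flannery--Hulpke and Kattler--Weitze-Schmith\"usen carries out.

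The genuine gap is your central arithmeticity step. Surjectivity of the reductions $\Gamma\to\Sp(6,\F_p)$ for all but finitely many $p$ is an automatic consequence of Zariski density by strong approximation (Matthews--Vaserstein--Weisfeiler), and it holds just as well for \emph{thin} Zariski-dense subgroups; no finite collection of congruence quotients, at any moduli, can therefore certify finite index in $\Sp(6,\Z)$. This is precisely the thin-versus-arithmetic dichotomy the theorem addresses: both alternatives have full congruence images, so your proposed route proves only Zariski density and leaves the possibility that $\Gamma$ is thin. The references \cite{long,2301.06894} do not argue this way either; they, and the present paper, invoke the transvection criterion of Singh--Venkataramana. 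Concretely, the paper exhibits three Dehn-multitwist transvections $C_{w_1},C_{w_2},C_{w_3}$ in $\Gamma$ associated to the cylinder decompositions in the directions $(1,0)$, $(0,1)$, $(1,2)$ (each of homological dimension two, so each multitwist contributes a one-dimensional image on the zero-holonomy part), computes the annihilator $e$ of $W=\mathrm{span}\{w_1,w_2,w_3\}$, and verifies by an explicit word that these transvections generate a large enough subgroup of $\Sp(W)$; the criterion then upgrades Zariski density to finite index. Some certificate of this kind (or another bona fide arithmeticity criterion) is indispensable, and its absence is a fatal hole in the proposal as written.
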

As we will see in Section \ref{homology}, the intersection matrix of the homology we use does not have determinant one, so we must pass to a finite index subgroup of $\Gamma$ in order to be a subgroup of the standard symplectic group. The index and congruence level are computed through GAP packages developed by Detinko--Flannery--Hulpke \cite{MR3739225} and Kattler--Weitze-Schmith\"{u}sen \cite{code}. Theorem \ref{main} results in the first data point of the possible indices and congruence levels of origami in genus 4.

\textbf{Relation to other results.} While an infinite family of arithmetic monodromies in genus 4 was established by Kany--Matheus \cite{2301.06894}, we believe the results of this article are worthwhile because of the unique properties of the surface.  What drew our attention to this origami was that it is ``exceptionally symmetric" in that its Veech group is as large as possible and given by $\mathrm{SL}(2,\mathbb Z).$ Additionally, the homological dimension is always 2. We now give some context as to why these points are interesting.

It was shown experimentally in Shrestha--Wang \cite{MR4516259}, that there are only 4 origamis of less than 17 squares whose Veech group is $\mathrm{SL}(2,\mathbb Z)$: the torus, the \emph{Eierlegendre Wollmichsau}, the \emph{Ornithorynque}, and the surface considered in this paper. We refer to \cite{MR4516259} for the description of the Eierlegendre Wollmichsau and Ornithorynque. These two surfaces have been the focus of much study \cite{MR2729331,MR2387362} because of their peculiar properties e.g. their Veech group is $\mathrm{SL}(2,\mathbb Z)$ and they have totally degenerate Lyapunov spectrum. What's most relevant for the discussion here is that Matheus--Yoccoz \cite{MR2729331} showed the Kontsevich–Zorich monodromy groups of these two surfaces are finite groups. That the surface we consider in this article is arithmetic and thus ``large"  is quite interesting given that the monodromies of the Eierlegendre Wollmichsau and Ornithorynque are at the other end of the spectrum. This indicates that there is little relationship between the symmetries of the surface (the Veech group) and the Kontsevich–Zorich monodromy group of the surface. 

Another interesting aspect of our surface is that the homological dimension is always two. Surfaces with directions of homological dimension one are never thin since Dehn multitwists in those directions produce elements in the kernel of the action on the homology (see Section 4.3 of \cite{MR4120783}). Thus, our surface was a candidate to produce an origami with thin monodromy. 

\textbf{Organization.} As in \cite{MR4120783}, we assume some familiarity with the basic features of origamis including their representation theory, homology, and Veech and affine groups of origamis. We recommend the survey of Matheus \cite{MR4563316}. The rest of the note is dedicated to proving Theorem \ref{main}. The organization of the paper is as follows. In Section \ref{Origami}, we describe some properties of the origami we are studying. In Section \ref{homology}, we find a basis for the homology of our origami. In Section \ref{KZmonodromy}, we compute generators for the monodromy group. Lastly, in Section \ref{arithmetic}, we utilize an arithmeticity criterion of \cite{MR3165424} that was explicated in \cite{long} in the context of monodromies of origamis to prove Theorem \ref{main}. 

\subsection{Acknowledgements} The authors would like to thank Carlos Matheus Silva Santos and Pascal Kattler for a careful reading of the manuscript. In particular, we thank Matheus for comments on the Lyapunov spectrum and the reference \cite{MR2218004} and Pascal Kattler for pointing out a mistake in a previous computation of $\alpha(S)$. Lastly, we are immensely grateful to Pascal Kattler and Gabriela Weitze-Schmith\"{u}sen for running the generators of Theorem \ref{thm:generators} through their software \cite{code}. A.S. was supported by the National Science Foundation Postdoctoral Fellowship under grant number DMS-2103136.
\section{The origami $\mathcal O$}\label{Origami}
Let $\mathcal O$ denote the origami (see Figure \ref{fig:surface} for an image of the surface) defined by the permutations
$$h=(1,2,3,4,5,6)(12,11,10,9,8,7)(13,14)(15,16)$$ 
and $$v=(12,2,16,14,10,6)(11,5,15,13,7,1)(3,9)(4,8).$$
The commutator $$[h,v]:=vhv^{-1}h^{-1}=(1,3,5)(2)(4)(6)(7,15,9)(8)(10,16,12)(11)(13)(14)$$ has 3 non-trivial cycles of length 3 and so $\mathcal O\in \mathcal{H}(2,2,2)$ and has genus 4.

As mentioned in the introduction, one aspect that makes this surface so special is its Veech group.  The Veech group is the stabilizer of $\mathcal O$ with respect to the $\textrm{SL}(2,\mathbb R)$ action on the space of translation surfaces. let $T=
\begin{pmatrix}
1&1\\
0&1
\end{pmatrix}$ and $S = 
\begin{pmatrix}
    1&0\\
    1&1
\end{pmatrix}$ and recall that $\mathrm{SL}(2,\mathbb Z)$ is generated by these two matrices.
\begin{prop}
    The Veech group of $\mathcal O$ is $\mathrm{SL}(2,\mathbb Z)$.
\end{prop}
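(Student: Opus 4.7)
The Veech group of any origami is automatically contained in $\mathrm{SL}(2,\mathbb Z)$, so the plan is to establish the reverse inclusion by verifying that both generators $T$ and $S$ lie in the Veech group.

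To do this I would use the standard combinatorial description of the Veech group due to Schmith\"{u}sen. Encoding $\mathcal O$ as the pair $(h,v)\in S_{16}\times S_{16}$, the Veech group is identified with the stabilizer of the simultaneous-conjugation class $[(h,v)]$ under the natural $\mathrm{SL}(2,\mathbb Z)\cong \mathrm{Out}^+(F_2)$-action on such pairs. With the standard convention this action sends $T:(h,v)\mapsto (h,vh)$ and $S:(h,v)\mapsto(hv,v)$ (up to orientation conventions that I would fix by following the reference). So the task reduces to exhibiting permutations $\sigma_T,\sigma_S\in S_{16}$ satisfying
\begin{align*}
\sigma_T h\sigma_T^{-1}&=h, & \sigma_T v\sigma_T^{-1}&=vh,\\
\sigma_S h\sigma_S^{-1}&=hv, & \sigma_S v\sigma_S^{-1}&=v.
\end{align*}

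To produce $\sigma_T$ and $\sigma_S$, I would first compute the permutations $vh$ and $hv$ directly from the cycle presentations of $h$ and $v$ in Section \ref{Origami}. Since $h$ and $v$ both have cycle type $(6,6,2,2)$, their centralizers in $S_{16}$ are fairly small, so a short enumeration inside these centralizers suffices to locate the required $\sigma_T,\sigma_S$. The visible symmetries of Figure \ref{fig:surface}, such as the $\pi$-rotation and cyclic shifts within the length-six cylinders, suggest natural ansatze for these permutations.

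The main obstacle is the purely computational one of actually writing down $\sigma_T$ and $\sigma_S$ and verifying the identities above; once that is done, the proposition follows immediately. An alternative that avoids the computation entirely is to cite the experimental classification of Shrestha--Wang \cite{MR4516259}, which identifies $\mathcal O$ as one of only four origamis with at most 16 squares whose Veech group equals all of $\mathrm{SL}(2,\mathbb Z)$, the others being the torus, the \emph{Eierlegendre Wollmichsau}, and the \emph{Ornithorynque}.
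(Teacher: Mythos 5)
Your proposal is correct and is essentially the paper's own argument: the paper likewise reduces the claim to exhibiting explicit permutations conjugating $T(h,v)$ and $S(h,v)$ back to $(h,v)$ (with the convention $T(h,v)=(h,vh^{-1})$, $S(h,v)=(hv^{-1},v)$), and supplies them as $\psi=(1,3,5)(2,4,6)(7,10)(8,11)(9,12)(15,16)$ and $\phi=(1,15)(2,14,6)(3,9)(5,7)(10,12,16)(11,13)$. The only thing your write-up leaves undone is the finite search for these conjugators, which is exactly the computational step the paper carries out.
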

\begin{proof}
At the level of permutations, the generators of $\mathrm{SL}(2,\mathbb Z)$ act by the rules $T(h,v)=(h,vh^{-1})$ and $S(h,v)=(hv^{-1},v)$. Since we care about the surfaces themselves rather than the labeling of the squares, the pair $(h,v)$ is defined up to simultaneous conjugations i.e. $(h,v)$ and $(\psi h \psi^{-1},\psi v \psi^{-1})$ are regarded as the same origami. With this in mind, we let 
$$\psi=(1,3,5)(2,4,6)(7,10)(8,11)(9,12)(13)(14)(15,16)$$
and $$\phi=(1,15)(2,14,6)(3,9)(4)(5,7)(8)(10,12,16)(11,13).$$ A computation shows that $\psi 
\circ T(h,v) \circ \psi^{-1}=(h,v)$ and $\phi \circ S(h,v) \circ \phi^{-1}=(h,v)$. 
\end{proof}

    \begin{remark}
    We remark that the work of Herrlich \cite{MR2218004} yields that any origami can be covered by one whose Veech group is $\SL(2,\mathbb Z)$. However, explicit computations outside of the present article seem difficult because the genus of the cover can grow quickly. For example, for the L-shaped origami of genus 2 and with 3 squares, the corresponding cover is of genus 37 and has 108 squares.
    \end{remark}

In the introduction, we also claimed that the homological dimension is always two. That is, in any rational direction, the surface $\mathcal O$ decomposes into cylinders whose waist curves span a two-dimensional subspace of the homology. Such surfaces have the potential for a thin monodromy group.

\begin{prop}\label{homologicaldimension}
    The homological dimension of $\mathcal O$ in any rational direction is 2.
\end{prop}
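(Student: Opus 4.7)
The plan is to reduce to a single rational direction via the Veech group, then compute directly. The previous proposition establishes that the Veech group of $\mathcal{O}$ equals $\SL(2,\Z)$, and this group acts transitively on $\mathbb{P}^1(\Q)$ (any primitive integer vector is the image of $(1,0)$ by some element of $\SL(2,\Z)$). Hence every rational direction is the image of the horizontal direction under some element of the Veech group. The corresponding affine diffeomorphism of $\mathcal{O}$ induces a linear isomorphism on $H_1(\mathcal{O},\Q)$ carrying the core curves of cylinders in one direction bijectively to the core curves in the image direction, so the dimension of the span of the waist curves is the same in every rational direction. It therefore suffices to verify the claim in the horizontal direction.

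In the horizontal direction, the cycles of $h$ yield four cylinders $C_1, C_2, C_3, C_4$ of circumferences $6, 6, 2, 2$ with core curves $\sigma_1, \sigma_2, \sigma_3, \sigma_4 \in H_1(\mathcal{O},\Q)$. To show $\dim \mathrm{span}(\sigma_1,\sigma_2,\sigma_3,\sigma_4) = 2$, I would exhibit two independent linear relations among the $\sigma_i$; these come from $2$-chains in the cellular decomposition of $\mathcal{O}$ whose boundaries express combinations of core curves as homologous. Such $2$-chains can be constructed by concatenating cylinders across saddle connections determined by the vertical permutation $v$. For the lower bound, I would compute the intersection pairing of the $\sigma_i$ with the vertical core curves (the cycles of $v$) and check that the resulting matrix has rank at least $2$, certifying two homologically independent classes.

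The main obstacle is the explicit computation of the two relations, which requires careful bookkeeping of how the horizontal cylinders glue via $v$. The explicit basis for $H_1(\mathcal{O})$ developed in Section \ref{homology} reduces this to linear algebra: once $\sigma_1, \ldots, \sigma_4$ are expressed as integer vectors in that basis, the span is read off as the rank of the resulting $4 \times 8$ matrix.
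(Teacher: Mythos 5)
Your reduction to the horizontal direction via the Veech group is exactly the paper's first step, and your identification of four horizontal cylinders of circumferences $6,6,2,2$ matches the paper. Where you differ is in how the two-dimensionality of the span of the four core curves is established: you defer this to an unexecuted computation (finding two explicit $2$-chain relations and a rank computation against the vertical core curves, or expressing the $\sigma_i$ in the basis of Section \ref{homology}). That would work, but the paper closes the argument with a short topological observation that spares you the bookkeeping you flag as the main obstacle: the two waist curves of the length-six cylinders together separate $\mathcal O$, hence cobound a subsurface and are homologous (up to sign), and likewise for the two length-two waist curves; this immediately gives the upper bound of $2$, and the lower bound follows since the long and short classes are independent. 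So your proposal is correct in strategy and would succeed if the linear algebra were carried out, but you should either complete that computation or replace it with the separating-curve argument; as written, the central step (dimension exactly $2$ in the horizontal direction) is only a plan, not a proof.
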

\begin{proof}
The surface $\mathcal O$ has two horizontal cylinders of length six and two horizontal cylinders of length two. Since the waist curves on the horizontal cylinders of length six disconnect $\mathcal O$, then they are homotopic. Similarly, the waist curves on the horizontal cylinders of length two are homotopic. By noting that the long and short waist curves are not homotopic, then we deduce that the homological dimension in the horizontal direction is two. Lastly, since the Veech group of $\mathcal O$ is $\mathrm{SL}(2,\mathbb Z)$ and any other cylinder direction can be moved to the horizontal, then we conclude the homological direction in any rational direction is two. 
\end{proof}

Additionally, the homological dimension of $\mathcal O$ has consequences on the Lyapunov spectrum of the Kontsevich-Zorich cocycle over $\textrm{SL}(2,\mathbb R)\cdot \mathcal O$. By Forni's theorem \cite{MR2820565}, the second Lyapunov exponent is positive. 
\section{Homology of $\mathcal O$}\label{homology}
In this section, we give a basis for the homology of $\mathcal O$ and define certain subspaces needed for later sections. We will work with the following basis of the absolute homology group $H_1(\mathcal O,\mathbb R)$ pictured in Figure \ref{fig:basis} that we denote by $\{\gamma_1,\ldots,\gamma_8\}$. The colors, orientations, and slopes are indicated in Table \ref{table:basis}.
\begin{figure}
    \centering
    \includegraphics[scale=0.58]{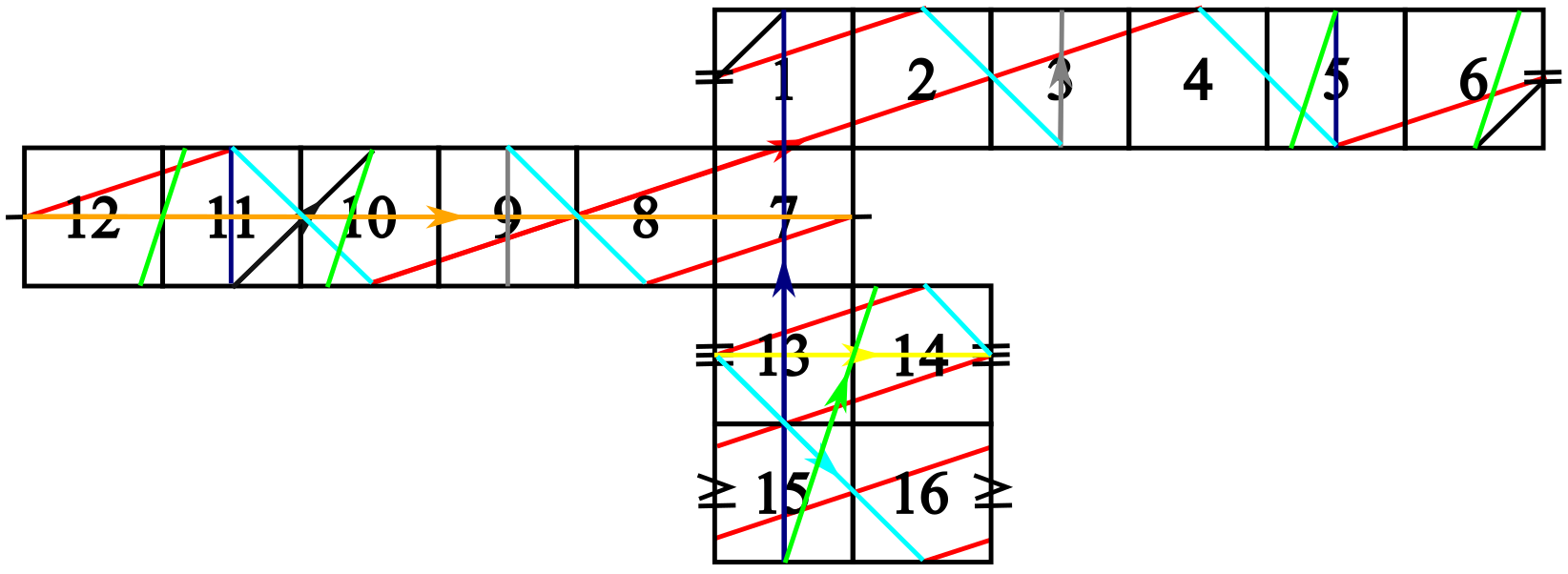}
    \caption{Basis for the homology of $\mathcal O$}
    \label{fig:basis}
\end{figure}

\begin{table}[h!]
    \centering
\begin{tabular}{ |c||c|c| c| c| } 
 \hline
  curve & color & slope & holonomy\\ \hline\hline
 $\gamma_1$ & yellow & 0 & $(2,0)^T$ \\  \hline
 $\gamma_2$ & black &  1  & $(2,2)^T$ \\  \hline
 $\gamma_3$ & orange & 0 & $(6,0)^T$\\  \hline
 $\gamma_4$ & red & 1/3 & $(18,6)^T$\\  \hline
 $\gamma_5$ & grey & $\infty$ & $(0,2)^T$ \\  \hline
 $\gamma_6$ & light blue & -1 & $(6,-6)^T$\\  \hline
 $\gamma_7$ & blue & $\infty$ & $(0,6)^T$\\  \hline
 $\gamma_8$ & green & 3  & $(2,6)^T$\\  \hline

\end{tabular}
\caption{Some properties of a basis of the absolute homology of $\mathcal O$.}
\label{table:basis}
\end{table}

The corresponding intersection matrix of our basis of homology is:
$$\Omega=\bordermatrix{&\gamma_1&\gamma_2&\gamma_3&\gamma_4&\gamma_5&\gamma_6&\gamma_7&\gamma_8\cr
                \gamma_1&0&0&0&1&0&-1&1&1\cr
                \gamma_2&0&0&-1&-1&0&-1&1&1\cr
                \gamma_3&0&1&0&2&1&-2&2&2\cr
                \gamma_4&-1&1&-2&0&2&-9&7&5\cr
                \gamma_5&0&0&-1&-2&0&-1&0&0\cr
                \gamma_6&1&1&2&9&1&0&2&3\cr
                \gamma_7&-1&-1&-2&-7&0&-2&0&-1\cr
                \gamma_8&-1&-1&-2&-5&0&-3&1&0\cr}.$$
Each entry of the matrix, $\Omega_{i,j}$, represents the intersection form $\langle\gamma_i,\gamma_j\rangle$, $i,j\in \{1,2,\dots,8\}$. The determinant of $\Omega$ is 16.

There is always the decomposition of the absolute homology 
$H_1(\mathcal O)=H_1 ^{\mathrm{st}}(\mathcal O)\oplus H_1 ^{(0)}(\mathcal O)$
where $H_1 ^{\mathrm{st}}(\mathcal O)$ the \emph{tautological plane} spanned by the
real and imaginary parts of the implicit Abelian differential and $H_1 ^{(0)}(\mathcal O)$ is the
$6$-dimensional orthogonal complement (with respect to the intersection form) given by the \emph{zero-holonomy subspace}. The name ``zero-holonomy" comes from the fact that the horizontal and vertical displacement of the curves are zero. Next section we justify the terminology of ``tautological plane".
In the setting of our specific surface, it can be checked that with respect to the basis $\{\gamma_1,\ldots,\gamma_8\}$ we have
$$H_1^{st}(\mathcal O) =\{2(\gamma_1+\gamma_3),2(\gamma_5+\gamma_7)\} \text{ and } 
H_1 ^{(0)}(\mathcal O) = \{\epsilon_1,     \epsilon_2,    \epsilon_3,   \epsilon_4,  \epsilon_5,    \epsilon_6\}$$
where
\begin{align*}
    \epsilon_1&=-3\gamma_1+\gamma_3,& \epsilon_2&=-6\gamma_1-3\gamma_2+\gamma_4,\\
    \epsilon_3&=\gamma_1-\gamma_2+\gamma_5,& \epsilon_4&=-6\gamma_1+3\gamma_2+\gamma_6,\\
    \epsilon_5&=3\gamma_1-3\gamma_2+\gamma_7,&\epsilon_6&=2\gamma_1-3\gamma_2+\gamma_8.
\end{align*}


\section{Kontsevich--Zorich monodromy group of $\mathcal O$}\label{KZmonodromy}

In this section, we compute the Kontsevich--Zorich monodromy group of $\mathcal O$. We define these terms and encourage the reader to see Matheus \cite{MR4563316} for more details. Let $\tilde\alpha:\mathrm{Aff}(\mathcal O)\to \mathrm{Sp}(H_1(\mathcal O,\mathbb R))$  denote the representation arising from the action of the affine diffeomorphisms $\mathrm{Aff}(\mathcal O)$ on the absolute homology group $H_1(\mathcal O,\mathbb R)$. By noting that the automorphism group of $\mathcal O$ is trivial, we can (and do) identify $\mathrm{Aff}(\mathcal O)$ with the Veech group of $\mathcal O$.

The representation $\tilde \alpha$ arising from the homological action of $\mathrm{Aff}(\mathcal O)$ respects this decomposition.
In fact, the name ``tautological" comes from the fact that $\Tilde{\alpha}(g)\mid_{H_1^{st}}=g$. Since the action of $\mathrm{Aff}(\mathcal O)$ is well understood on the tautological plane, we restrict our analysis of the action of $\mathrm{Aff}(\mathcal O)$ to the zero-holonomy subspace. Let $\alpha:\mathrm{Aff}(\mathcal O)\to \textrm{Sp}(H_1^{(0)}(\mathcal O))$ denote this restriction. The  \emph{Kontsevich--Zorich monodromy group} of $\mathcal O$ is defined to be $\alpha(\mathrm{Aff}(\mathcal O))$. 

\begin{thm}\label{thm:generators}
        The Kontsevich-Zorich monodromy group of $\mathcal O$ is generated by the following two matrices:
$$\alpha(T)=\begin{pmatrix}
        1&11&2&-6&7&5\\
        0&-4&-1&3&-3&-2\\
        0&-2&-1&3&-2&-3\\
        0&-3&-1&3&-3&-3\\
        0&2&0&-1&1&1\\
        0&0&0&0&0&-1
    \end{pmatrix}\text{ and } \alpha(S)=\begin{pmatrix}
        1&1&-1&4&-3&-3\\
        0&-1&0&0&0&0\\
        1&6&2&-3&3&3\\
        0&3&1&-3&3&3\\
        1&0&-1&3&-2&-1\\
        0&4&1&-3&3&2
    \end{pmatrix}.$$

\end{thm}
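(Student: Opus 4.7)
The plan is to reduce the computation of the entire monodromy group to the computation of the images of only two generators. Since the automorphism group of $\mathcal{O}$ is trivial, the affine group is identified with the Veech group, which by the earlier proposition equals $\mathrm{SL}(2,\mathbb{Z}) = \langle T, S \rangle$. Therefore $\alpha(\mathrm{Aff}(\mathcal{O}))$ is generated by $\alpha(T)$ and $\alpha(S)$, and the theorem reduces to the explicit computation of these two $6 \times 6$ matrices in the basis $\{\epsilon_1, \ldots, \epsilon_6\}$ of $H_1^{(0)}(\mathcal{O})$.

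First, I would fix explicit affine diffeomorphisms realizing $T$ and $S$. These are provided directly by the proof of the Veech group proposition: one composes the linear shear with the relabeling permutation $\psi$ for $T$, and with $\phi$ for $S$, obtaining diffeomorphisms of $\mathcal{O}$ to itself whose linear parts are the prescribed matrices.

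Next, I would apply each of these affine diffeomorphisms to each of the eight basis curves $\gamma_1, \ldots, \gamma_8$ of Figure \ref{fig:basis}. For each curve, the linear shear transforms its holonomy vector and produces a curve on the sheared origami; the permutation $\psi$ (respectively $\phi$) then identifies this curve with a curve on $\mathcal{O}$ via the relabeling of squares. Using the horizontal and vertical cylinder decompositions governed by $h$ and $v$, I would express the resulting curve as an integer combination of $\gamma_1, \ldots, \gamma_8$. This produces two $8 \times 8$ integer matrices representing the action of $T$ and $S$ on the full absolute homology $H_1(\mathcal{O}, \mathbb{R})$.

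Finally, using the change of basis associated to the decomposition $H_1(\mathcal{O}, \mathbb{R}) = H_1^{st}(\mathcal{O}) \oplus H_1^{(0)}(\mathcal{O})$ from the previous section, I would extract the $6 \times 6$ block representing $\alpha(T)$ and $\alpha(S)$ on the zero-holonomy subspace and compare with the claimed matrices. As consistency checks I would verify that (i) the $8 \times 8$ matrices restrict to $T$ and $S$ on the tautological plane, as required by the relation $\tilde{\alpha}(g)|_{H_1^{st}} = g$, and (ii) the resulting $6 \times 6$ matrices preserve the restriction of the intersection form $\Omega$ to $H_1^{(0)}(\mathcal{O})$, i.e., that they are symplectic. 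The main obstacle is the careful bookkeeping needed to transport each curve through the shear and relabeling without sign or index errors: a single mistake propagates through both the $8 \times 8$ matrix and its projection. To mitigate this I would tabulate the action on each edge of each labeled square in a methodical fashion (or implement the curve tracing programmatically) and rely on the two consistency checks above as redundancy.
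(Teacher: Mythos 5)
Your overall skeleton matches the paper's: reduce to the two generators $T$ and $S$ via $\mathrm{Aff}(\mathcal O)=\mathrm{SL}(2,\mathbb Z)=\langle T,S\rangle$, compute the action $\tilde\alpha$ on all eight basis curves using the explicit shears together with the relabeling permutations $\psi$ and $\phi$, and then restrict to $H_1^{(0)}$ in the basis $\{\epsilon_1,\ldots,\epsilon_6\}$. The one place where you diverge --- and where your write-up is weakest --- is the step ``express the resulting curve as an integer combination of $\gamma_1,\ldots,\gamma_8$.'' You do not say how to do this, and it is not a routine matter: the image of a basis curve under the shear-plus-relabeling is some concrete curve on the squares, and there is no visible way to read off its coefficients in a fixed homology basis by inspection of the cylinder decompositions. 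The paper circumvents exactly this difficulty with the intersection form: it computes the vector of intersection numbers $\langle\tilde\alpha(T)\gamma_k,\gamma_j\rangle$ (a purely local, combinatorial count on the picture of the image curve), writes $\tilde\alpha(T)\gamma_k=\sum_i a_i\gamma_i$, and solves the resulting linear system $\sum_i a_i\Omega_{i,j}=\langle\tilde\alpha(T)\gamma_k,\gamma_j\rangle$; this has a unique solution because $\Omega$ is invertible ($\det\Omega=16$). If you intend a genuinely direct decomposition, you would need to work with a full generating set for $H_1$ (e.g.\ the edges of the squares) and its relations, which is a substantially heavier bookkeeping task than you acknowledge; otherwise your method effectively collapses back to the paper's. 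Your two consistency checks (restriction to the tautological plane equals $g$, and symplecticity with respect to $\Omega|_{H_1^{(0)}}$) are sound and worth keeping, but they supplement rather than replace a workable procedure for the decomposition step.
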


\begin{proof}
Since $\mathrm{Aff}(\mathcal O)=\mathrm{SL}(2,\mathbb Z)=\langle T,S\rangle$, then it suffices to compute the image $\alpha(T)$ and $\alpha(S)$. To this end, we utilize the intersection form on the surface $\mathcal O$. See for example, the proof on page 12 of \cite{GLS}. We first compute the homological action of $\mathrm{SL}(2,\mathbb Z)$ given by the representation $\tilde\alpha$ and then restrict it to $\alpha=\tilde\alpha|_{H^{(0)} _1}$.

We only compute $\tilde\alpha(T)$ and a similar argument works for $\tilde\alpha(S)$. Given a basis curve of the absolute homology of $\mathcal O$, we consider the intersection pattern of this curve after the application of $\tilde\alpha(T)$ against the basis $\{\gamma_1,\ldots,\gamma_8\}$ of $H_1(\mathcal O)$. By expressing $\tilde\alpha(T)$ as a linear combination of the basis and utilizing the intersection form, we create a linear system of equations that we can solve.

\begin{figure}
    \centering
    \includegraphics[scale=0.7]{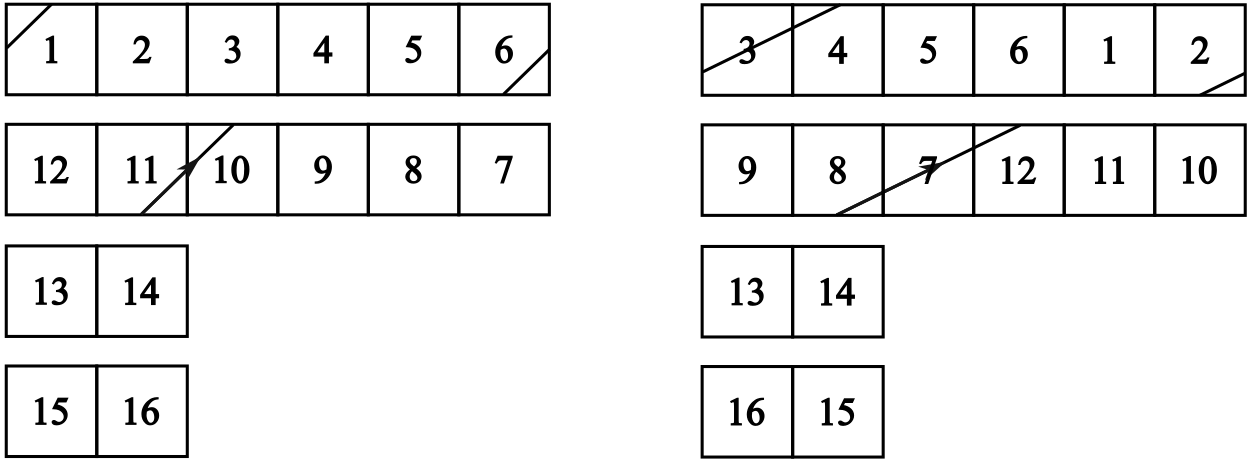}
    \caption{$\gamma_2$ (left) and $\tilde\alpha(T)\gamma_2$ (right)}
    \label{fig:imagegamma2}
\end{figure}

For example, we compute $\tilde\alpha(T)\gamma_2.$ The image is shown in Figure \ref{fig:imagegamma2}. The intersection pattern of $\langle \tilde\alpha(T)\gamma_2,\gamma_i\rangle$ with the basis  can be recorded with the vector $(0,-1,-1,1,-2,1,0)$. We note that this requires using the relabeling map $\psi$ from Section \ref{Origami}. Now writing $\tilde\alpha(T)\gamma_2 = \sum_{i=1} ^{8}a_i\gamma_i$ where $a_i\in\mathbb R$ and using the intersection matrix $\Omega$ we get
$$\langle \tilde\alpha(T)\gamma_2, \gamma_j \rangle=\left\langle \sum\limits_{i=1}^8 a_i\gamma_i,\gamma_j\right\rangle=\sum\limits_{i=1}^8 a_i\langle\gamma_i,\gamma_j\rangle=\sum\limits_{i=1}^8 a_i\Omega_{i,j}.$$
Setting this equal to the vector $(0,-1,-1,1,-2,1,0)$ and solving the linear system yields $\tilde\alpha(T)\gamma_2=-4\gamma_1-2\gamma_3+\gamma_4+\gamma_5+\gamma_6$.
Doing this for all the basis curves and doing a similar computation for $\tilde\alpha(S)$ shows that the homological is given by:

\begin{tabularx}{\textwidth}{@{}l<{,}@{\ }X@{}}
  $\tilde\alpha(T)\gamma_1=\gamma_1$  & $\tilde\alpha(S)\gamma_1=\gamma_2$ \\
  $\tilde\alpha(T)\gamma_2=-4\gamma_1-2\gamma_3+\gamma_4+\gamma_5+\gamma_6$ & $\alpha(S)\gamma_2=2\gamma_1+\gamma_3-\gamma_5-\gamma_6+\gamma_7-\gamma_8$ \\
  $\tilde\alpha(T)\gamma_3=\gamma_3$   & $\tilde\alpha(S)\gamma_3=\gamma_1-\gamma_2+\gamma_3+\gamma_5+\gamma_7$\\
  $\tilde\alpha(T)\gamma_4=7\gamma_1-\gamma_2+5\gamma_3-\gamma_4+\gamma_5+2\gamma_7$ & $\tilde\alpha(S)\gamma_4=5\gamma_1+4\gamma_3-\gamma_4+3\gamma_5+3\gamma_7+\gamma_8,$\\
  $\tilde\alpha(T)\gamma_5=\gamma_2$ & $\tilde\alpha(S)\gamma_5=\gamma_5$\\
  $\tilde\alpha(T)\gamma_6=-\gamma_7$ & $\tilde\alpha(S)\gamma_6=\gamma_3$\\
  $\tilde\alpha(T)\gamma_7=\gamma_1-\gamma_2+\gamma_3+\gamma_5+\gamma_7$ &  $\tilde\alpha(S)\gamma_7=\gamma_7$ \\
  $\tilde\alpha(T)\gamma_8=-\gamma_1-\gamma_3+\gamma_4+\gamma_7-\gamma_8$ & $\tilde\alpha(S)\gamma_8=\gamma_1+\gamma_2+2\gamma_7-\gamma_8$.
\end{tabularx}

Finally, using that the action on the zero-holonomy subspace is just the restriction of the homological action, $\alpha=\tilde\alpha|_{H^{(0)} _1}$, we obtain the matrices in the statement of the theorem with respect to the basis $\{\epsilon_1,\ldots,\epsilon_6\}$.
\end{proof}




\section{Zariski denseness, arithmeticity, index, and congruence level}\label{arithmetic}
In this section, we verify the Zariski density and arithmeticity of the monodromy group $\Gamma=\alpha(\textrm{Aff}(\mathcal O))=\langle\alpha(T),\alpha(S)\rangle$ as well as record the index and congruence level of the conjugate of a finite index subgroup of $\Gamma$. The Zariski density will be verified using a criterion of Kany-Matheus \cite{2301.06894} (bottom of page 2). The arithmeticity will be done by using a criterion of Singh--Venkataramana \cite{MR3165424} and explicated for origamis in \cite{long} page 10. Their metacode uses 2-cylinder decompositions which do not exist for our surface, so we apply a slight modification, see Remark \ref{modification}. The index and congruence level is computed using the computer software of Detinko--Flannery--Hulpke \cite{MR3739225} and Kattler--Weitze-Schmith\"{u}sen \cite{code}.

\begin{thm}
    The monodromy group $\Gamma$ is Zariski dense in $\Sp(6,\R)$.
\end{thm}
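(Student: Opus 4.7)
The plan is to invoke the Zariski density criterion of Kany--Matheus \cite{2301.06894}, which reduces the problem to an explicit algebraic check on the generators $\alpha(T)$ and $\alpha(S)$ from Theorem \ref{thm:generators}. In broad strokes, such a criterion in $\Sp(2g,\R)$ asks to verify two things: that the natural representation of $\Gamma$ on $H_1^{(0)}(\mathcal O)\cong\R^6$ is absolutely irreducible, and that $\Gamma$ contains a sufficiently generic element (typically one whose characteristic polynomial is irreducible over $\Q$ and has a dominant simple eigenvalue, so that $\Gamma$ cannot lie in a proper symplectic, orthogonal, or reducible algebraic subgroup).

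First I would pin down the irreducibility of the $\Gamma$-action. Any $\Gamma$-invariant subspace of $H_1^{(0)}(\mathcal O)$ must be simultaneously invariant under $\alpha(T)$ and $\alpha(S)$, so one can proceed concretely: compute the eigenspaces (or generalized eigenspaces) of $\alpha(T)$ over $\overline{\Q}$ and check directly that no non-trivial union of them is stabilized by $\alpha(S)$. If $\alpha(T)$ turns out to have repeated eigenvalues, a cleaner route is to check that the only matrices commuting with both $\alpha(T)$ and $\alpha(S)$ are scalar, i.e., the centralizer algebra is trivial.

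Next I would produce a witness element for the second hypothesis of the criterion. Candidate elements are short words in $T$ and $S$, such as $\alpha(TS)$, $\alpha(T^2S)$, $\alpha(TS^2)$, $\alpha((TS)^k)$, whose $6\times 6$ matrices can be multiplied out from Theorem \ref{thm:generators}. For each, one computes the characteristic polynomial, verifies it is self-reciprocal of degree $6$ (automatic from symplecticity), and then checks irreducibility over $\Q$ together with whatever genericity condition (e.g.\ big Galois group, or simple eigenvalue of largest modulus) the Kany--Matheus statement demands. Any one such candidate that passes the tests finishes the argument.

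The main obstacle is essentially computational: the search for a suitable word is finite at each word length but not a priori bounded, and irreducibility/Galois checks on degree-$6$ symplectic polynomials are tedious to do by hand. Once a single short word is exhibited whose characteristic polynomial clears the criterion, combining it with the irreducibility of the $\Gamma$-action yields Zariski density of $\Gamma$ in $\Sp(6,\R)$ directly from \cite{2301.06894}.
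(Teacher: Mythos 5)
Your overall strategy---reduce to the Kany--Matheus criterion and verify it on explicit words in $\alpha(T)$ and $\alpha(S)$---is exactly what the paper does, but the two hypotheses you propose to check are not the ones the criterion actually requires, and the versions you state are materially weaker. First, the ``genericity'' condition is not merely that some element has irreducible self-reciprocal characteristic polynomial with a dominant simple eigenvalue: the criterion demands a \emph{Galois-pinching} element, i.e.\ one whose characteristic polynomial is irreducible over $\Z$, splits over $\R$, and has the full hyperoctahedral Galois group (order $2^3\cdot 3!=48$) among reciprocal sextics. This stronger condition is what rules out multiplicative relations among the eigenvalues; without it, an element with irreducible characteristic polynomial and a dominant eigenvalue can still lie in a proper subgroup acting irreducibly (e.g.\ a principal $\mathrm{SL}_2$ in $\Sp(6)$ via $\Sym^5$, where the eigenvalues are $\lambda^{\pm 1},\lambda^{\pm3},\lambda^{\pm5}$), so your stated check would not suffice. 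Second, the criterion's other hypothesis is not absolute irreducibility of the $\Gamma$-action (your centralizer/eigenspace computation); it is the existence of a nontrivial unipotent $B$ coming from a Dehn (multi)twist such that $(B-\mathrm{Id})(H_1^{(0)})$ is \emph{not} Lagrangian. In the paper this is immediate: for $B=\alpha(T)^6$ the image of $B-\mathrm{Id}$ is $1$-dimensional (reflecting homological dimension $2$), hence not a $3$-dimensional Lagrangian.

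A smaller practical point: the Galois-pinching witness is not found among short words. The paper uses $A=\alpha(S)\alpha(T)\alpha(S)\alpha(T)^{20}$, with characteristic polynomial $1-3x-91x^2-262x^3-91x^4-3x^5+x^6$, and then verifies irreducibility, real splitting, and that the Galois group has order $48$. So your plan is repairable---replace your two conditions with the correct ones and be prepared to search beyond length-$2$ or length-$3$ words---but as written it has a genuine gap: the conditions you verify do not imply Zariski density.
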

\begin{proof}
   According to Kany-Matheus \cite{2301.06894}, it suffices to find two elements $A$ and $B$ in $\textrm{Sp}(H_1 ^{(0)}(\mathcal O))$ such that:
    \begin{enumerate}
        \item The matrix $A$ is a \emph{Galois-pinching}. That is, a symplectic matrix whose characteristic polynomial is irreducible over $\mathbb Z$, splits over $\mathbb R$, and possesses the largest possible (hyperoctahedral) Galois group among reciprocal polynomials of degree 6;
        \item A non-trivial unipotent element $B$ induced by a Dehn twist such that $(B-\mathrm{Id})(H_1 ^{(0)}(\mathcal O))$ is not a Lagrangian subspace of $H_1 ^{(0)}(\mathcal O)$.
    \end{enumerate}

    Let $A=\alpha(S)\alpha(T)\alpha(S)\alpha(T)^{20}$ and $B=\alpha(T)^6$.
    
    The characteristic polynomial of $A$ is  $f_A(x)=1 - 3 x - 91 x^2 - 262 x^3 - 91 x^4 - 3 x^5 + x^6$. Note that this polynomial is reciprocal: $x^6f_A(x^{-1})=f_A(x)$. Additionally, it splits over $\R$ and is irreducible over $\mathbb Z$. Moreover, the permutation group acts on a set of cardinality 6, the order of the Galois group is precisely $2^4\cdot 3=48$ which matches with the order of the hyperoctahedral group. That gives us condition (1).
    
    Now we turn our attention to $B=\alpha(T)^6$. The subspace $(B-\mathrm{Id})(\Ho(\mathcal O))$ is 1 dimensional. Yet, if $(B-\textrm{Id})(\Ho(\mathcal O))$ is a Lagrangian subspace, then $\dim((B-\textrm{Id})(\Ho(\mathcal O)))=\frac{\dim(\Ho(\mathcal O))}{2}=\frac{6}{2}=3\neq 1$. Therefore, we have satisfied condition (2). 
    
    Together with the criterion, the monodromy $\Gamma$ is Zariski dense in $\Sp(6,\R)$.
\end{proof}

 The Zariski-density provided by the above theorem, allows us to say more about the Lyapunov spectrum. In particular, by the main theorem of Eskin-Matheus, we conclude simplicity of the Lyapunov spectrum. That is, the spectrum is of the form 
$$1>\lambda_2>\lambda_3>\lambda_4>0>-\lambda_4>-\lambda_3>-\lambda_2>-1.$$
Some numerical experiments with the \texttt{surface\_dynamics} package \cite{DFL} indicate that $\lambda_2\sim 0.531505241984150, \lambda_3\sim0.277626450819842,$ and $\lambda_4\sim0.191386026412578$.
\begin{remark}\label{modification}
For arithmeticity, the article \cite{long} used 2-cylinder decompositions of their origami, but for our origami no such decomposition exists. Indeed, their are 4 cylinders in the horizontal direction and any other rational direction can be moved to the horizontal using the action of the Veech group $ \mathrm{SL}(2,\mathbb Z)$. The more relevant data for the metacode of \cite{long} is that the homological dimension is 2, so that the associated Dehn multitwist $C$ will have a specific form. Namely, the image of the restriction of $C - \textrm{Id}$ to the zero holonomy part is a one-dimensional subspace.
\end{remark}

\begin{thm}
    The monodromy group $\Gamma$ is arithmetic.
\end{thm}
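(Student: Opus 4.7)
The plan is to apply the arithmeticity criterion of Singh--Venkataramana \cite{MR3165424} as explicated for origamis in \cite{long}, replacing the two-cylinder hypothesis used there with the observation of Remark \ref{modification} that what the metacode really requires is a Dehn multitwist $C$ whose induced map on the zero-holonomy part $\Ho(\mathcal O)$ has one-dimensional image. Proposition \ref{homologicaldimension} guarantees exactly this, since the horizontal homological dimension is $2$ and one of those two independent directions is absorbed into the tautological plane $\Hst(\mathcal O)$. Because the preceding theorem has already established Zariski density of $\Gamma$ in $\Sp(6,\R)$ together with a Galois-pinching element, the verification of the criterion reduces to producing the correct transvection data.

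Concretely, I would reuse the Galois-pinching matrix $A = \alpha(S)\alpha(T)\alpha(S)\alpha(T)^{20}$ and the unipotent element $B = \alpha(T)^6$ that already appeared in the Zariski density argument. Let $v \in \Ho(\mathcal O)$ be a generator of the one-dimensional image of $B - \mathrm{Id}$, so that $B$ acts as a symplectic transvection along $v$ up to sign and scalar. Since the characteristic polynomial of $A$ is irreducible of degree $6$, the iterates $v, Av, A^2v, \dots, A^5 v$ form a $\Q$-basis of $\Ho(\mathcal O) \otimes \Q$, and the corresponding conjugates $A^i B A^{-i}$ are transvections along these iterates. These are the unipotent generators the metacode of \cite{long} feeds into the criterion of \cite{MR3165424}. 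I would then verify the explicit non-degeneracy hypotheses by computing the Gram matrix $\bigl(\langle A^i v, A^j v\rangle\bigr)_{0 \le i,j \le 5}$ and checking the required generic configuration against the list in Singh--Venkataramana.

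The main obstacle I anticipate is this last verification: even with $A$ Galois-pinching and $v$ explicitly in hand, it is not automatic that the orbit $\{A^i v\}$ satisfies the required pairing relations, and the conclusion genuinely depends on the integer entries of $A$ rather than on any formal property. The step is finite and mechanical but must be carried out explicitly; this is precisely the kind of calculation that the software \cite{code,MR3739225} is designed to handle. Once arithmeticity is obtained, the index $2^{42}\cdot 3^3\cdot 5\cdot 7$ and congruence level $64$ asserted in Theorem \ref{main} follow by running the generators from Theorem \ref{thm:generators} through those GAP packages, after conjugating a finite-index subgroup of $\Gamma$ into the standard $\Sp(6,\Z)$ to compensate for the fact that $\det \Omega = 16$.
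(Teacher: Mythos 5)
Your overall framework is the right one --- Zariski density plus the Singh--Venkataramana criterion, with the two-cylinder hypothesis of \cite{long} replaced by the observation that the multitwist has rank one on $\Ho(\mathcal O)$ --- but the decisive step is missing, and the substitute you propose for it is not what the criterion asks for. The metacode of \cite{long} does not terminate in ``computing a Gram matrix and checking the configuration against a list'': it requires exhibiting three transvections $C_{w_1}, C_{w_2}, C_{w_3}$ whose directions span a three-dimensional subspace $W$ on which the symplectic form is degenerate with one-dimensional radical, and then producing an explicit word in these transvections whose restriction to $W$ is a non-trivial element of the relevant unipotent group. The paper does exactly this: it takes the multitwists in the directions $(1,0)$, $(0,1)$, $(1,2)$, whose zero-holonomy twist vectors are $w_1 = 2\epsilon_1$, $w_2 = 2\epsilon_5 - 6\epsilon_3$, $w_3 = -6\epsilon_1+12\epsilon_3+8\epsilon_4-4\epsilon_5+8\epsilon_6$, computes the annihilator $e=-w_1+2w_2+w_3$ of $W$, writes the three transvections as explicit $3\times 3$ matrices in the basis $\{w_1,w_3,e\}$, and checks that $C_{w_1}C_{w_2}C_{w_1}^{-1}C_{w_2}^{-1}C_{w_3}^3C_{w_1}^{-1}$ is a non-trivial unipotent fixing $w_1$ and $w_3$. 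Without some such explicit certificate the argument does not close; the conclusion genuinely depends on this word existing, which is not a formal consequence of Zariski density plus the existence of a rank-one unipotent.

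Your specific substitute --- conjugating the single transvection $B=\alpha(T)^6$ by powers of the Galois-pinching element $A$ and examining the Gram matrix of $\{A^i v\}_{0\le i\le 5}$ --- imports the setup of the hypergeometric case of \cite{MR3165424}, where the group is generated by a companion-type matrix and a single transvection; but the theorem there is not a statement that any Zariski-dense group containing a transvection with spanning $A$-orbit is arithmetic, and there is no finite ``list of generic configurations'' against which a Gram matrix can be checked. To salvage your route you would still have to select three of the vectors $A^i v$ spanning a degenerate three-dimensional subspace with one-dimensional radical and carry out the same word search, at which point you have rederived a harder-to-compute version of the paper's argument. The cylinder directions are the natural source of the three transvections precisely because Proposition \ref{homologicaldimension} guarantees each rational direction contributes a rank-one multitwist on $\Ho(\mathcal O)$, and the Veech group being all of $\SL(2,\Z)$ makes these directions easy to enumerate. (Also, the software of \cite{MR3739225,code} is used in the paper only after arithmeticity is established, to compute the index and congruence level; it is not a stand-in for the missing verification.)
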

\begin{proof}
    Using the metacode of \cite{long}, it suffices to find 3 transvections with respect to 3 rational directions of cylinder decomposition. Recall, by the argument from Proposition \ref{homologicaldimension}, that in any rational direction there are two ``long" cylinders with homologous waist curves and two ``short" cylinders with homologous waist curves. We consider three rational directions indicated by the vectors $(1,0)$, $(0,1)$, and $(1,2)$ and consider linear combinations of the waist curves so that we are in the zero-holonomy subspace. The corresponding zero-holonomy vectors of the rational directions $(1,0)$, $(0,1)$, and $(1,2)$ are $w_1=2\gamma_3-6\gamma_1$, $w_2=2\gamma_7-6\gamma_5$, and $w_3=2\alpha-6\beta$. 

    Recall the basis $\{\epsilon_1,\ldots, \epsilon_6\}$ of zero-holonomy subspace from the end of Section \ref{homology}. With respect to this basis, the curves chosen for the transvection are given by $w_1 = 2\epsilon_1$, $w_2= 2\epsilon_5-6\epsilon_3$, and $w_3=-6\epsilon_1+12\epsilon_3+8\epsilon_4-4\epsilon_5+8\epsilon_6$.

The transvections $C_{w_i}$ in the directions $w_i$ for $i=1,2,3$ act on the zero-holonomy subspace 
are given by 
\begin{align*}
C_{w_1} (X) &= X+2 \langle X,\gamma_3\rangle \gamma_3+6\langle X,\gamma_1\rangle \gamma_1,\\
 C_{w_2} (X) &= X+2 \langle X,\gamma_7\rangle \gamma_7+6\langle X,\gamma_5\rangle \gamma_5,\\
 C_{w_3} (X) &= X+2 \langle X,\alpha \rangle \alpha+6\langle X,\beta\rangle \beta.
\end{align*}

Let $e$ be the annihilator of the subspace $W=\textrm{span}_\mathbb Q \{w_1,w_2,w_3\}$. It can be computed by the formula, 
$$e=-\frac{\langle w_3,w_2\rangle}{\langle w_1,w_2\rangle}w_1-\frac{\langle w_3,w_1\rangle}{\langle w_2,w_1\rangle}w_2+w_3=-w_1+2w_2+w_3.$$

Then the transvections $C_{w_i}$ for $i=1,2,3$ with respect to the ordered basis $\{w_1,w_3,e\}$ are
$$C_{w_1}=\begin{pmatrix}
    1&-12&0\\
    0&1&0\\
    0&0&1
\end{pmatrix}, C_{w_2}=\begin{pmatrix}
    0&-1&0\\1&2&0\\-1&-1&1
\end{pmatrix}, C_{w_3}=\begin{pmatrix}
    1&0&0\\4&1&0\\0&0&1
\end{pmatrix}.$$
We find a non-trivial element in $\textrm{Sp}(W)$, by considering the word
$$C_{w_1}C_{w_2}C_{w_1}^{-1}C_{w_2}^{-1}C_{w_3}^3C_{w_1}^{-1}=\begin{pmatrix}
    1&0&0\\0&1&0\\12&144&1
\end{pmatrix}.$$
The metacode of \cite{long} implies that $\Gamma$ is arithmetic.
\end{proof}

We now explain the process of recording the index and congruence level of $\Gamma$. See also Section 6.2 of \cite{long}. Recall that our intersection matrix $\Omega$ has determinant 16. Thus, we cannot conjugate $\Gamma$ to a subgroup of $\Sp(6,\mathbb Z)$. However, we can pass to a specific finite index subgroup that \emph{is} conjugate to a subgroup of $\Sp(6,\mathbb Z)$. These steps allow us to use the software of Detinko--Flannery--Hulpke \cite{MR3739225} and Kattler--Weitze-Schmith\"{u}sen \cite{code}.  We start this process now by changing our basis $\Omega$ to a more standard form. Let
$$\Theta = \left(
\begin{array}{cccccc}
 0 & -1 & 0 & 2 & 0 & 0 \\
 0 & 0 & 0 & -1 & 0 & 0 \\
 0 & 0 & 1 & 1 & 1 & 0 \\
 0 & 0 & 0 & 0 & 0 & -1 \\
 1 & 0 & 0 & 0 & 1 & 0 \\
 -1 & 4 & 0 & 2 & 0 & 0 \\
\end{array}
\right)\text{ so that }
\Theta^t \Omega \Theta = \left(
\begin{array}{cccccc}
 0 & 0 & 0 & 1 & 0 & 0 \\
 0 & 0 & 0 & 0 & 4 & 0 \\
 0 & 0 & 0 & 0 & 0 & 1 \\
 -1 & 0 & 0 & 0 & 0 & 0 \\
 0 & -4 & 0 & 0 & 0 & 0 \\
 0 & 0 & -1 & 0 & 0 & 0 \\
\end{array}
\right).$$
That is, $\Theta$ is a change of basis into a more standard from with respect to a basis $\{b_1,b_2,b_3,b_4,b_5,b_6\}$. Let $\Gamma ' = \Theta^{-1}\Gamma \Theta$. By passing to the sublattice with basis $\{4b_1,b_2,4b_3,b_4,b_5,b_6\}$ and using the software of Kattler--Weitze-Schmith\"{u}sen \cite{code}, we obtain an index 288 subgroup of $\Gamma'$ that can be conjugated to the standard symplectic group.  Applying the software of Detinko--Flannery--Hulpke \cite{MR3739225} yields the congruence level 64 and the index 4156153952993280 in $\Sp(6,\mathbb Z)$.

\end{document}